\renewcommand{\phi}{\varphi}
\newcommand{\R}{\ensuremath{\mathbb{R}}}
\newcommand{\C}{\ensuremath{\mathbb{C}}}
\newcommand{\zed}{\ensuremath{\mathbb{Z}}}
\newcommand{\A}{\ensuremath{\mathcal{A}}}
\newcommand{\pstar}{\ensuremath{P_*}}
\newcommand{\reals}{\mathbb{R}}
\newcommand{\im}{\operatorname{im}}
\newcommand{\FPhat}{\ensuremath{\hat{F}_P}}
\newcommand{\FQhat}{\ensuremath{\hat{F}_Q}}
\newcommand{\X}{X}
\newcommand{\Z}{Z}
\newcommand{\M}{M}
\newcommand{\NCP}{\ensuremath{\operatorname{NCP}}}
\newtheorem{theorem}{Theorem}[section]
\newtheorem{proposition}[theorem]{Proposition}
\newtheorem{corollary}[theorem]{Corollary}
\newtheorem{lemma}[theorem]{Lemma}
\theoremstyle{definition}
\newtheorem{ack}{Acknowledgements}
\newtheorem{definition}[theorem]{Definition}
\newtheorem{remark}[theorem]{Remark}
\newtheorem{example}[theorem]{Example}
\numberwithin{equation}{section}
\begin{document}
\title[Non-crossing partitions and Milnor fibers]
{Non-crossing partitions and Milnor fibers}

\author[Brady]{Thomas~Brady}
\address{School of Mathematical Sciences\\
Dublin City University\\
Glasnevin, Dublin 9\\
Ireland}
\email{tom.brady@dcu.ie}

\author[Falk]{Michael~Falk$\>^{*}$}
\address{Department of Mathematics and Statistics\\
Northern Arizona University\\
Flagstaff,  AZ  86011 \\
U.S.A.}
\email{michael.falk@nau.edu}
\thanks{${\>\kern-\parindent}^*\,$supported in part by a Fulbright U.S. Scholars grant}

\author[Watt]{Colum~Watt}
\address{School of Mathematical Sciences\\
Dublin Institute of Technology\\
Dublin 8\\
Ireland}
\email{colum.watt@dit.ie}

\date{\today}
\subjclass{Primary 20F55;\, Secondary 52C35}
\thanks{ 2000 \textit{Mathematics Subject Classification.} Primary
20F55; \, Secondary 05E99.}
\keywords{Milnor fibers, finite reflection groups, generalised braid groups, non-crossing partitions}

\begin{abstract}
For a finite real reflection group $W$ we use non-crossing partitions of type $W$ to construct finite cell complexes 
with the homotopy type of the Milnor fiber of the associated $W$-discriminant $\Delta_W$ and that of the Milnor fiber of  the defining polynomial of the associated reflection 
arrangement. These complexes support natural cyclic group actions realizing the geometric monodromy. 
Using the shellability of the non-crossing partition lattice, this cell complex yields a chain complex of 
homology groups computing the integral homology of the Milnor fiber of $\Delta_W$.
\end{abstract}

\maketitle

\section{Introduction}
\label{intro}
Suppose $g\in \C[z_1, \ldots, z_n]$ is a quasi-homogeneous polynomial, 
defining the hypersurface $V=g^{-1}(0)$ in $\C^n$. Then $g$ restricts to 
a locally trivial fibration $g \colon \C^n- V \to \C^*$, the global Milnor fibration, 
with fiber $g^{-1}(1)$, the Milnor fiber of $g$ \cite{Mi68}. The topology of $g^{-1}(1)$
and the monodromy of the bundle are invariants of the singularity type of $g$ at the origin. 
Of special interest is the case where $g=Q_W$ is a product of complex linear forms defining 
the arrangement $\A=\A_W$ of reflecting hyperplanes in $\C^n$ of a finite real or complex 
reflection group $W$, see \cite{Sett04, Sett09, MacPap09,DimLeh16}.


In this setting $W$ acts on $\C^n$ preserving $V=\bigcup_{H\in \A} H$, 
the quotient $W\backslash \C^n$ is homeomorphic to $\C^n$, and under this homeomorphism $W \backslash V$ is 
carried to a hypersurface $\Delta_W$ in $\C^n$.  
This hypersurface is the zero 
locus of a quasi-homogeneous polynomial, $P_W$, well-defined up to polynomial automorphism of $\C^n$, called the discriminant associated with $W$ (see Section~\ref{discriminant}). 

The fundamental group of $\C^n - \Delta_W$ is 
the generalized braid group (or Artin group), $B(W)$, associated with $W$. 
If $W$ has type $A_{d-1}$ then $P_W$ is the classical discriminant for 
polynomials of degree $d$ and $B(W)$ is isomorphic to the classical
 braid group on $d$ strands.  In this paper, we construct a non-crossing partition (\NCP) model for
 the Milnor fiber $F_P=P_W^{-1}(1)$ and study its structure, including the monodromy action, 
 in the case where $W$ is a real reflection group.  
 We also construct an 
 \NCP\ model for the Milnor fiber $F_Q=Q_W^{-1}(1)$ of the reflection arrangement $\A_W$. 
Both models  arise as subcomplexes of appropriate covering spaces of a 
finite $K(B(W), 1)$ which is  defined in terms of non-crossing partitions, see \cite{ Be, B, BW}. 

The \NCP\ model for $F_P$ has a natural filtration by subcomplexes, which are seen to be homotopy equivalent to bouquets of spheres using the lexicographic shellability of the non-crossing partition lattice. This yields a chain complex computing $H_*(F_P,\Z)$ whose terms are homology groups of truncations of this lattice. The homology of $H_*(F_P,\Z)$ has been computed in most cases, see \cite{CalSal04, Ca06}.
\section{\NCP\ models for subgroups of $B(W)$}
\subsection{Background} 
\label{background} Let $W$ be  a finite, irreducible, real reflection group of rank $n$.  
For background on finite reflection groups see \cite{Bou, Hum90}.   
Equip $W$ with the 
total reflection length function $w \mapsto |w|$ and with the partial order $\le$ given by 
$u \le w$ whenever $|u|+|u^{-1}w| = |w|$ (see \cite{DA}).  
We will use the notation $u\lessdot w$ for the case where $w$ covers $u$.
Fix a specific 
Coxeter element $\gamma$ in $W$  and define the non-crossing partitions 
to be the elements in the interval $[e,\gamma]$ in the poset $(W,\le)$.  
The poset of $W$-non-crossing partitions is a lattice, $L$ (see \cite{BrWa08}), 
whose order complex is denoted  $|L|$.  When 
$W$ is of type $A_n$, $W$ is isomorphic to the group of permutations of
$\{1, \ldots, n+1\}$ and the non-crossing partitions are those
elements whose cycle structure gives a 
classical non-crossing partition, see \cite{B}.

We define 
$B(W)$ to be the group with generating set   
 \[\{[w]\mid w \in L , w\ne e\}\]
subject to the relations
\[[w_1][w_1^{-1}w_2] = [w_2] \  \mbox{whenever} \ w_1\le w_2.\]
It is shown in \cite{Be, B, BW}, that $B(W)$ is isomorphic to the generalized braid group of type $W$.
\vskip .2cm  

We recall from \cite{Be, B, BW}, the contractible, $n$-dimensional, simplicial complex, $\X$,
whose $k$-simplices are ordered $(k+1)$-tuples from $B(W)$ of the form  
$(g_0, g_1, \dots , g_k)$ with $g_i = g_0[w_i]$ for some chain $e<w_1<w_2< \dots < w_k$ in $L$.  
It is convenient to use the notation $(g_0, e< w_1< \dots <w_k)$ for such a simplex.   
Thus the simplices of $\X$ are identified with 
pairs $(g,\sigma)$ where $g \in B(W)$ and $\sigma$ is an \textbf{initialized chain} in $L$,
that is,
$\sigma$ is a chain of the form $e<w_1< \dots < w_k$. As $B(W)$ acts freely on $X$, the quotient 
$K:= B(W)\backslash X$ is a $K(B(W),1)$ and $\X$ is its universal
cover. The action of $B(W)$ on $\X$ is given by 
\[g\cdot(g_0, g_1, \dots , g_k) = (gg_0, gg_1, \dots , gg_k),\]
or, in terms of the pair notation, 
\begin{equation}\label{action}
g\cdot(g_0, e< w_1< \dots <w_k) = (gg_0, e< w_1< \dots <w_k).
\end{equation}
It is immediate that the simplex $(g_0,e< w_1< \dots <w_k)$ has $k$ faces 
of the form 
\((g_0,e< w_1< \dots <\widehat{w_i}< \dots <w_k), \ \mbox{for}\ 1\le i \le k,\)
each obtained by deleting one of $w_1, w_2, \dots , w_k$ from $\sigma$.  
The remaining face is obtained by deleting $e$ from $\sigma$ and hence 
is given by the ordered set
\[(g_0[w_1], g_0[w_2], \dots , g_0[w_k]) = (g_0[w_1])\cdot(e, [w_1]^{-1}[w_2], \dots , [w_1]^{-1}[w_k]).\]
In pair notation, this is denoted $(g_0[w_1], e< w_1^{-1}w_2< \dots < w_1^{-1}w_k)$.
\subsection{Quotients of $\X$}
\label{quotients}
If $H$ is a normal subgroup of $B(W)$ then we can form a CW complex $\X_H$,  whose cells are of the form $(Hg,\sigma)$, where $\sigma$ is an initialized chain in $L$ 
and the first component is a right $H$ coset.  
If $\sigma =   e< w_1< \dots <w_k$, 
then this cell has $k$ boundary faces of the form 
\[(Hg,e< w_1< \dots <\widehat{w_i}< \dots <w_k), \ \mbox{for}\ 1\le i \le k,\]
with the remaining face given by $(Hg[w_1], e< w_1^{-1}w_2< \dots < w_1^{-1}w_k)$.  
It will be convenient to refer to $(Hg[w_1], e< w_1^{-1}w_2< \dots < w_1^{-1}w_k)$ as the \textbf{top} face of $(Hg,\sigma)$ and to $(Hg,e< w_1< \dots < \dots <w_{k-1})$ as the \textbf{bottom} face of $(Hg,\sigma)$.  
Since $\X$ is contractible, $\X_H$ is a $K(H,1)$.  The  action of the quotient group $H\backslash B(W)$ on 
$\X_H$ is given by $(Hg_1)(Hg_2, \sigma) = (Hg_1g_2, \sigma)$.
\vskip .2cm
We now highlight a particular feature of these complexes $\X_H$.
\vskip .2cm
\begin{lemma} 
\label{incidence}
In $\X_H$, each  $k$-cell   of the form
\[c_k = (Hg,e< w_1< w_2< \dots < w_k)  \ \ \ \mbox{with $|w_k| < n$} \]
is incident on precisely two $(k+1)$-cells of the form
\[c_{k+1} = (Hg', e< u_1< \dots <u_k< \gamma).\]
\end{lemma}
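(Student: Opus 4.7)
My strategy is to enumerate the $k$-faces of an arbitrary $(k+1)$-cell $c_{k+1}=(Hg',e<u_1<\dots<u_k<\gamma)$ in $\X_H$, discard those inconsistent with $|w_k|<n$, and solve the resulting matching equations in each remaining case. From Section~\ref{quotients}, $c_{k+1}$ has three kinds of $k$-faces: the $k$ \emph{internal} faces $(Hg',e<u_1<\dots<\widehat{u_i}<\dots<u_k<\gamma)$ for $1\le i\le k$, each of which still ends at $\gamma$; the \emph{bottom} face $(Hg',e<u_1<\dots<u_k)$ obtained by deleting $\gamma$; and the \emph{top} face $(Hg'[u_1],e<u_1^{-1}u_2<\dots<u_1^{-1}u_k<u_1^{-1}\gamma)$ obtained by deleting $e$. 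Because the chain of $c_k$ ends at $w_k\ne\gamma$ (since $|w_k|<n$), $c_k$ cannot coincide with any internal face, and only the bottom and top possibilities remain.

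In the bottom case, matching $(Hg',e<u_1<\dots<u_k)=c_k$ forces $Hg'=Hg$ and $u_i=w_i$, giving the unique candidate $(Hg,e<w_1<\dots<w_k<\gamma)$. This is a bona fide cell because $w_k\in L$ with $|w_k|<|\gamma|$ implies $w_k<\gamma$ in $L$.

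In the top case, matching the top face with $c_k$ forces $u_1^{-1}\gamma=w_k$, $u_1^{-1}u_{i+1}=w_i$ for $1\le i\le k-1$, and $Hg'[u_1]=Hg$, so $u_1=\gamma w_k^{-1}$, $u_{i+1}=u_1w_i$, and $Hg'=Hg[u_1]^{-1}$, uniquely determining a second candidate. The nontrivial check is that $e<u_1<u_1w_1<\dots<u_1w_{k-1}<\gamma$ is genuinely a chain in $L$. I would argue this by observing that $u_1=\gamma w_k^{-1}$ is conjugate to $w_k^{-1}\gamma$, so $|u_1|=n-|w_k|$, and then examining the factorization
\[\gamma = u_1\cdot w_1\cdot(w_1^{-1}w_2)\cdots(w_{k-1}^{-1}w_k),\]
whose factor lengths sum to $(n-|w_k|)+|w_k|=n=|\gamma|$. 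Because this is a reduced factorization of $\gamma$, the standard fact that partial products of a reduced factorization of $\gamma$ lie in $[e,\gamma]$ and are successively comparable produces exactly the required chain in $L$.

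The main technical obstacle is this chain-validity check in the top case, which rests on the reduced-factorization principle for the non-crossing partition lattice; the face-type dichotomy and the solution of the matching equations are direct from the definitions in Section~\ref{quotients}. Distinctness of the two candidate cells is a minor additional point: equality of their chains would force $u_1=w_1$, hence $\gamma=w_1^{k+1}$ with $|w_1^i|=i|w_1|$ for all $1\le i\le k+1$, a configuration ruled out by $|w_k|<n$ in a real reflection group of rank $n$.
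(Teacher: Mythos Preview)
Your argument follows exactly the paper's route: enumerate the face types of a $(k+1)$-cell whose chain ends in $\gamma$, discard the internal faces because they still contain $\gamma$, and solve the matching equations in the bottom and top cases to produce the two explicit $(k+1)$-cells (the paper records the same formulas $u_1=\gamma w_k^{-1}$, $u_i=\gamma w_k^{-1}w_{i-1}$, and $Hg'=Hg[u_1]^{-1}$). You actually go beyond the paper's own proof by verifying via the reduced-factorization argument that the top-case chain is a genuine chain in $L$ and by discussing distinctness; the paper simply writes down the two solutions and stops.
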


\vskip .2cm
\begin{proof}  Suppose that the cell  
\[c_k = (Hg,e< w_1< w_2< \dots < w_k),\]
with $|w_k| < n$, is incident on a $(k+1)$-cell of form
\[c_{k+1} = (Hg', e< u_1< \dots <u_k< \gamma).\]
Since the chain of $c_k$ does not contain $\gamma$, $c_k$ must be obtained by deleting either $e$ or $\gamma$ 
from the chain of $c_{k+1}$. 
In the latter case, $c_k$ is the bottom face of $c_{k+1}$, 
forcing 
$Hg'=Hg$
and $u_i = w_i$ for $i = 1, \dots , k$.  In the
former case, $c_k$ is the top face 
\[
(Hg'[u_1], e< u_1^{-1}u_2< \dots <u_1^{-1}u_k<u_1^{-1}\gamma),\]
so that 
$Hg'=Hg[w_1]^{-1}$ and
$u_1 = \gamma w_k^{-1}, u_2 = \gamma w_k^{-1}w_1, \dots , u_k = \gamma w_k^{-1}w_{k-1}$.
\end{proof}

In our examples, $H$ will arise as the kernel of a specific homomorphism $\phi$ 
with domain $B(W)$. In this case we will denote $X_{\ker(\phi)}$ by $X_\phi$.   
We can then identify the coset $Hg$ with the element $\phi (g)$ and denote the cells of $\X_H$ as 
pairs $(\phi(g), \sigma)$ where $\sigma$  is an initialized chain in $L$.  
If $\sigma =   e< w_1< \dots <w_k$, 
then the cell $(\phi(g), \sigma)$ has top face given by 
$(\phi(g[w_1]), e< w_1^{-1}w_2< \dots < w_1^{-1}w_k)$ or 
$(\phi(g)\phi([w_1]), e< w_1^{-1}w_2< \dots < w_1^{-1}w_k)$. 
\begin{example}
If $\phi$ is the trivial homomorphism, then $H = B(W)$ and $X_\phi = K$ is the 
$K(B(W), 1)$ introduced earlier.   
The cells of $K$ are of the form $(e, \sigma)$, where $\sigma$ is an initialized chain in $L$.   If 
$\sigma =   e< w_1< \dots <w_k$, 
then this cell has top face given by 
\[(e\phi([w_1]), e< w_1^{-1}w_2< \dots < w_1^{-1}w_k) = (e, e< w_1^{-1}w_2< \dots < w_1^{-1}w_k).\]  
Thus $\X=\X_\phi$ can be identified with the quotient of $|L|$ under the equivalence relation generated by identifying 
$w_1<w_2< \dots <w_k$ with $e<w_1^{-1}w_2< \dots < w_1^{-1}w_k$.
\end{example}
\begin{example}
\label{pure}The standard projection $s \colon B(W) \to W: [w] \mapsto w$, which
takes each \NCP\ generator of $B(W)$ to the corresponding \NCP\ in $W$, 
is a homomorphism by our presentation of $B(W)$. The kernel of $s$ is the pure braid 
group $PB(W)$ associated to $W$ and $X_s$ is a $K(\pi,1)$ for $\pi=PB(W)$.
(By \cite{Bries73,De72}, $X_s$ is homotopy equivalent to the complement 
$\M=\C^n - \bigcup_{H \in \A} H$, where \A\ is the complexification of the 
associated real reflection arrangement.)
The cells of $X_s$ can be identified with pairs $(w, \sigma)$, for 
$w \in W$ and $\sigma$ an initialized chain in $L$.  The  top face of the cell 
$(w, e<w_1<\dots <w_k)$ is $( ww_1, e<w_1^{-1}w_2<\dots <w_1^{-1}w_k)$. 
\end{example}

We will consider two further examples of this construction in sections \ref{disc} 
and \ref{fullfibre}.  These will give the models for the fibers mentioned in the introduction.  
The next section establishes the homotopy types of these fibers.

\section{Discriminants and Milnor fibers}
\label{discriminant}

In this section we recall some definitions and basic facts about discriminants and Milnor fibers. 

\subsection{The {\sl W}-discriminant} Recall that $W$ is a real reflection group whose
action on $\R^n$ has been complexified  to an action on 
$\C^n = \R^n \otimes \C$. Let $\{f_1, \ldots, f_n\}$ be a set of basic invariants 
for the ring of $W$-invariant, complex polynomials \cite{Che55}.  
The function $f=(f_1,\ldots,f_n) \colon \C^n \to \C^n$ induces
a homeomorphism of $W\backslash \C^n$ with $\C^n$.   
Let $T$ denote the set of reflections in $W$ and, for each  $t \in T$, let  $H_t \subset \C^n$
denote its fixed complex hyperplane and $\lambda_t \colon \C^n \to \C$ be a 
complex linear form 
with kernel $H_t$.
The polynomial $Q=\prod_{t\in T} \lambda_t \colon \C^n \to \C$ 
has the property that 
$Q(wx)=\det(w)Q(x)$ for all $w \in W, x \in \C^n$ and hence $Q^2$ is 
invariant under the action of $W$. It follows that  $Q^2=P(f_1, \ldots, f_n)$ for some 
quasi-homogeneous polynomial $P  \in \C[z_1, \ldots, z_n]$ whose weights are equal to the 
degrees of the $ f_i$. The polynomial $P$ is called the discriminant of $W$. It is unique up to polynomial automorphism of $\C^n$. 
The action of $W$ on $\C^n$ 
leaves the affine algebraic hypersurface 
$V=Q^{-1}(0)=\bigcup_{t \in T} H_t$ invariant and its quotient 
$\Delta_W = W\backslash V$ is identified with the affine algebraic 
hypersurface $\Delta := P^{-1}(0)$. 
The space $\M=\C^n - V$ is a $K(PB(W),1)$, and $W$ acts freely on $\M$, so the space 
$f(M)=W \backslash \M \cong \C^n - \Delta$ is a $K(B(W),1)$ \cite{Bries71,De72}. 
In what follows, we identify $PB(W)$ with $\pi_1(M,z_0)$ and $B(W)$ with $\pi_1(f(M),f(z_0))$.

\subsection{Milnor fibers of $P$ and $Q$} The restriction  to $\C^n - g^{-1}(0)$ of 
 any quasi-homogeneous polynomial 
$g \colon \C^n \to \C$ 
 is a locally trivial fibration  whose fiber $g^{-1}(1)$ 
 is called the Milnor fiber of $g$ \cite{Mi68}. The Milnor fibers of $P$ and $Q$ respectively
 will be denoted by $F_P$ and $F_Q$. 
 These spaces are determined up to polynomial diffeomorphism by $W$. 
Then $(Q^2)^{-1}(1)=Q^{-1}(1) \cup Q^{-1}(-1)$ is invariant  
under the action of $W$ and 
$F_P=W\backslash ((Q^2)^{-1}(1)) \cong W^+\backslash F_Q$, where 
$W^+=\{w \in W \mid \det(w)=1\}$. 
The space ${F_Q}$ is a connected, regular, $W^+$-cover of $F_P$
since the action of $W^+$ on $F_Q$ is free.

We will show that each of $F_P$ and $F_Q$ is homotopy equivalent to a complex of the
form $\X_\phi$. Our proofs will make use of  the following general result.

\begin{proposition}
\label{HomEquiv}
Suppose $g \colon E \to \C^*$ is a fibration with fiber $F$, where each of 
$E$ and $F$ has the homotopy type of a connected CW complex. Then $F$ is homotopy equivalent 
to the cover of $E$ corresponding to the kernel  of $g_* \colon \pi_1(E) \to \pi_1(\C^*)=\zed$.
\end{proposition}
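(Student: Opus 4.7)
The plan is to realise $F$ as a fibre of a fibration over a contractible base that is simultaneously the desired cover of $E$. Concretely, I would pull back $g$ along the universal covering $\exp\colon\C\to\C^*$, obtaining
\[\tilde E=\{(e,z)\in E\times\C : g(e)=\exp(z)\},\]
with its two projections $p\colon\tilde E\to E$ and $\tilde g\colon\tilde E\to\C$.

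Since pullbacks of coverings are coverings, $p$ is a regular cover of $E$ with deck group $\zed$. A loop $\alpha$ in $E$ based at some $e_0$ lifts through $p$ if and only if $g\circ\alpha$ lifts through $\exp$ to a loop in $\C$; because $\pi_1(\C)=0$, this happens exactly when $g_*[\alpha]=0$ in $\pi_1(\C^*)$. Hence $p_*\pi_1(\tilde E)=\ker g_*$, so $p$ is the cover named in the statement. On the other hand, pullbacks of fibrations are fibrations, so $\tilde g\colon\tilde E\to\C$ is a fibration whose fibre over any point is canonically homeomorphic to $F$; since $\C$ is contractible, the long exact sequence of homotopy groups shows that the inclusion $F\hookrightarrow\tilde E$ of any fibre is a weak equivalence.

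To upgrade this weak equivalence to a genuine homotopy equivalence, I would apply Whitehead's theorem: $F$ has CW homotopy type by hypothesis, and $\tilde E$ inherits CW homotopy type from $E$, since any cover of a CW complex admits a natural CW structure by lifting cells and this transfers across a homotopy equivalence in a standard way. Therefore $F\simeq\tilde E$. I do not anticipate any serious obstacle; the most delicate step is the identification $p_*\pi_1(\tilde E)=\ker g_*$, but this is immediate from unique path lifting for $\exp$ together with the universal property of the pullback.
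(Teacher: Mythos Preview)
Your argument is correct and takes a genuinely different route from the paper's. The paper proceeds more abstractly: it lets $F'$ denote the connected cover of $E$ corresponding to $\ker(g_*)$, observes from the long exact sequence that $i_*\colon\pi_1(F)\to\pi_1(E)$ is injective with image $\ker(g_*)$, lifts the fibre inclusion $i\colon F\hookrightarrow E$ to a map $h\colon F\to F'$, and then checks that $h$ induces isomorphisms on all homotopy groups (using that $p_*$ and $i_*$ are isomorphisms on $\pi_k$ for $k\geq 2$) before invoking Whitehead.

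Your pullback construction is more geometric: by forming the fibre product with $\exp\colon\C\to\C^*$ you obtain the desired cover and a fibration over a contractible base simultaneously, so the weak equivalence $F\hookrightarrow\tilde E$ comes for free from the long exact sequence without any separate bookkeeping of $\pi_1$ versus higher $\pi_k$. One small point you leave implicit: connectedness of $\tilde E$ (needed to identify it with \emph{the} connected cover for $\ker g_*$) follows because $F$ connected forces $g_*$ to be surjective via $\pi_1(E)\to\pi_1(\C^*)\to\pi_0(F)=0$. Both approaches finish with Whitehead's theorem; yours packages the homotopy-group comparison into a single pullback square, while the paper's makes the comparison map $h$ explicit, which is occasionally useful when one wants to track the monodromy.
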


\begin{proof} Let $i$ denote the inclusion of $F$ into $E$.
The exact sequence of the fibration implies that $i_* \colon \pi_1(F) \to \pi_1(E)$ 
is an injection whose image is $\ker(g_*)$. Let $p \colon F' \to E$ denote the connected cover of $E$ 
corresponding to $\ker(g_*)$. Then the inclusion $i$ lifts to a map $h \colon F \to F'$ with 
$p \circ h = i$. We show that $h$ is a homotopy equivalence. Since $p_*$ and $i_*$
are injections, it follows that 
 $h_* \colon \pi_1(F) \to \pi_1(F')$ is an injection and hence
 is an isomorphism. Since $p_* \colon \pi_k(F') \to \pi_k(E)$ 
and  $i_* \colon \pi_k(F) \to \pi_k(E)$ are  isomorphisms for all $k \geq 2$
(the latter by the exact sequence of the fibration), so also is 
 $h_* \colon \pi_k(F) \to \pi_k(F')$. As each of $F$ and $F'$ has the homotopy type of 
 a CW complex, $h$ is a homotopy equivalence.
\end{proof}

Since algebraic sets are homotopy equivalent to CW complexes, 
applying Proposition 3.1 yields the following corollary. 

\begin{corollary}\hskip -1mm \begin{enumerate}
\item The Milnor fiber $F_P$ of $P$ is homotopy equivalent to the cover of $f(M)=W\backslash M$ corresponding to
 the kernel of the map $P_* \colon B(W) \to \zed$.
\item The Milnor fiber $F_Q$ of $Q$ is homotopy equivalent to the cover of $f(M)=W\backslash M$ 
corresponding to the group~$f_*(\ker(Q_*))$.
\end{enumerate}
\label{fib}
\end{corollary}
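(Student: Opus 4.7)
The plan is to apply Proposition~\ref{HomEquiv} directly to the fibrations induced by $P$ and $Q$, and then to reinterpret the resulting covers as covers of $f(M)=W\backslash M$ using elementary covering-space bookkeeping.

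For part (i), the polynomial $P$ is quasi-homogeneous, so the discussion at the start of Section~\ref{discriminant} gives a locally trivial fibration $P\colon \C^n-P^{-1}(0)\to \C^*$ with fiber $F_P$. Since $\C^n-P^{-1}(0)=\C^n-\Delta=f(M)$ and both $f(M)$ and $F_P$ are complex algebraic sets, they have the homotopy type of CW complexes. I would therefore apply Proposition~\ref{HomEquiv} directly with $E=f(M)$ and $F=F_P$; noting that $\pi_1(f(M))=B(W)$ by the identification recalled just before the proposition, this identifies $F_P$ with the cover of $f(M)$ corresponding to $\ker(P_*\colon B(W)\to \zed)$.

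For part (ii), the same reasoning applied to the quasi-homogeneous polynomial $Q$ yields a locally trivial fibration $Q\colon M=\C^n-V\to \C^*$ with fiber $F_Q$, and Proposition~\ref{HomEquiv} produces a homotopy equivalence between $F_Q$ and the cover of $M$ corresponding to $\ker(Q_*\colon \pi_1(M)\to\zed)=\ker(Q_*\colon PB(W)\to \zed)$. The statement is phrased instead in terms of a cover of $f(M)$, so the final step is to translate: I would invoke the regular $W$-covering $f\colon M\to f(M)$, under which $f_*\colon \pi_1(M)\to\pi_1(f(M))$ is an injection with image $PB(W)\subset B(W)$. Composing a cover $F'\to M$ with $f$ yields a covering $F'\to f(M)$ whose image subgroup in $\pi_1(f(M))$ is the $f_*$-image of the original image subgroup in $\pi_1(M)$. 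Applying this to $\ker(Q_*)\subset PB(W)$ produces the cover of $f(M)$ corresponding to $f_*(\ker(Q_*))$, and this cover is homotopy equivalent to $F_Q$.

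The only real content beyond Proposition~\ref{HomEquiv} is the covering-space bookkeeping in (ii), specifically that the composition of the two covers is again a covering map and that its classifying subgroup is $f_*(\ker(Q_*))$. Both facts are standard consequences of the injectivity of $f_*$ and the local triviality of $f$, so I do not expect this step to present a genuine obstacle; the main conceptual input is the recognition that the Milnor fibration of $Q$ naturally lives over $M$ rather than over $f(M)$, which is precisely what forces the image subgroup $f_*(\ker(Q_*))$ to appear in the statement.
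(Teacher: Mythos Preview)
Your proposal is correct and follows essentially the same approach as the paper: part~(i) is a direct application of Proposition~\ref{HomEquiv} to the fibration $P\colon f(M)\to\C^*$, and part~(ii) first applies Proposition~\ref{HomEquiv} to $Q\colon M\to\C^*$ and then composes the resulting cover with the finite cover $f\colon M\to f(M)$ to obtain the cover of $f(M)$ corresponding to $f_*(\ker(Q_*))$. The paper's proof is identical in structure, only slightly terser.
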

\begin{proof} (ii): By proposition~\ref{HomEquiv}, $F_Q$ is homotopy equivalent 
to the connected domain,~$Y$, of a covering map $\rho \colon Y \to M$ for which
$\rho_*(\pi_1(Y,y_0)) = \ker(Q_*)$. Since $f \colon M \to W\backslash M$ is a finite cover,
the map $f \circ \rho$ is a covering map with $(f\circ \rho)_*(\pi_1(Y,y_0)) = f_*(\ker(Q_*))$,
as required.
\end{proof}

\subsection{The characteristic homomorphism} It remains to identify the 
homomorphisms  $Q_* \colon PB(W)=\pi_1(\M) \to \zed$ and 
$P_* \colon B(W)=\pi_1(W \backslash \M) \to \zed$. 
First we describe convenient generating sets for their respective domains. 
Fix $\epsilon > 0$. 
For each reflection $t \in T$, choose a point
$z_t \in H_t - \bigcup_{t'\in T, t' \neq t} H_{t'}$ and let $D_t$ be 
the closed disc of radius $\epsilon$ centred at $z_t$ in the 
complex line $L_t$ which passes through $z_t$ and  is
orthogonal to $H_t$.  The complex structure induces a natural orientation 
on each $L_t$. By shrinking $ \epsilon $, if necessary, we may assume that
$D_t \cap \bigcup_{t'\in T, t' \neq t} (H_{t'}\cup D_{t'}) = \emptyset $ for
all $t \in T$.  Now choose a basepoint $z_0$ in $\M$ and for each $t \in T$
choose a path $\sigma_t$ in $\M$ which starts at $z_0$ and ends on the boundary of 
$D_t$. Let $\gamma_t$ be the 
loop which travels along $\sigma_t$, then
around the boundary of $D_t$ in a positive orientation and finally back to 
$z_0$ along $\sigma_t$. The set of homotopy classes $\overline{\gamma}_t $ for 
$t \in T$ generates
$\pi_1(\M,z_0)$ \cite{Bries71}; see \cite{AllBas16} for a recent generalization. 

Since $W$ acts freely on $M$, the restriction of $f$ to $M$ is a regular covering map
onto $f(M)=W \backslash \M$. We specify a set of elements of  $\pi_1(W \backslash \M,f(z_0))$ 
which corresponds to
the set of reflections in $W \cong \pi_1(W \backslash \M,f(z_0)) / f_*\pi_1(\M,z_0)$
Under this isomorphism,  if $\delta$ is 
any path in $\M$ which starts at $z_0$ and 
ends at $w(z_0)$ (or which starts at $w(z_0)$ and ends at $w(w(z_0)$)),
then the homotopy class of $f \circ \delta$ corresponds to $w\in W$. 
If $t\in T$ is a reflection, let $\delta_t$ be the path in 
$\M$ which travels first along $\sigma_t$, then  half way around the 
boundary of $D_t$ in the positive sense and finally along the reverse of 
$t\sigma_t$ to $t(z_0)$. Similarly, let $\delta'_t$
be the path  from $t(z_0)$ to $z_0$ in 
$\M$ which travels first along $t \sigma_t$, then around the 
other half of the boundary of $D_t$ in the positive sense and finally along the reverse of 
$\sigma_t$ to $t(t(z_0))=z_0$. Note that $f \circ \delta_t$ and $f \circ \delta'_t$
represent the same homotopy class, $\Gamma_t$ say, and that the composition
$\Gamma_t$ followed by $\Gamma_t$ is represented by the path $f \circ \gamma_t$.
From the short exact sequence
\[\pi_1(\M,z_0) \to \pi_1(W \backslash \M,f(z_0))\to W\]
the group $\pi_1(W \backslash \M,f(z_0))$ is generated by
$\{ \overline{f \circ \gamma}_t,\Gamma_t: t \in T\}$ and, in fact,  
by the smaller set 
$\{\Gamma_t:t \in T\}$, since 
$\overline{f \circ \gamma}_t = \Gamma_t^2$. For $t \in T$, the generator
$\Gamma_t \in \pi_1(W \backslash \M,f(z_0))$ corresponds to the 
generator $[t]$ in the presentation for $B(W)$
from subsection~\ref{background}.
 
\begin{proposition} 
\hskip -1mm \begin{enumerate}
\item The map $Q_* \colon \pi_1(\M,z_0) \to \pi_1(\C^*,1) \cong \zed$ is given 
by\\ $Q_*(\overline{\gamma}_t)=1$ for each $t \in T$.
\item The map $P_* \colon \pi_1(W \backslash \M,f(z_0)) \to \pi_1(\C^*,1) \cong \zed$ 
is given 
by $P_*(\Gamma_t)=1$ for each $t \in T$.
\end{enumerate}
\label{uplink}
\end{proposition}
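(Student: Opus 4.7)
My plan is to reduce both identities to a direct local winding-number computation near the reflecting hyperplane $H_t$, and for part (ii) to combine that computation with the identity $Q^2 = P\circ f$ coming from the definition of the discriminant.

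For (i), $Q_*(\bar\gamma_t)$ is by definition the winding number of $Q\circ\gamma_t$ about $0$. Since $\gamma_t$ has the form $\sigma_t\cdot\partial D_t\cdot\sigma_t^{-1}$ and $\pi_1(\C^*,1)\cong\zed$ is abelian, this reduces to the winding number of $Q|_{\partial D_t}$ about $0$. I would then factor $Q=\lambda_t\cdot h_t$, where $h_t=\prod_{t'\neq t}\lambda_{t'}$. By the choice of $\epsilon$, $D_t$ meets no $H_{t'}$ with $t'\neq t$, so $h_t$ is nonvanishing on the entire disc $D_t$; hence $h_t|_{\partial D_t}\colon\partial D_t\to\C^*$ extends continuously to $D_t$ and has winding number $0$. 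Parametrizing $\partial D_t$ by $\theta\mapsto z_t+\epsilon e^{i\theta}v$ for a complex generator $v$ of $L_t$, one computes $\lambda_t(z_t+\epsilon e^{i\theta}v)=\epsilon\lambda_t(v)\,e^{i\theta}$, which winds exactly once around $0$ as $\theta$ runs over $[0,2\pi]$; the orientation agrees with the standard one on $\C^*$ because the $\C$-linear map $\lambda_t\colon L_t\to\C$ is holomorphic and so preserves the natural orientations. Adding the two contributions gives $Q_*(\bar\gamma_t)=1$.

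For (ii), I would exploit the identity $P\circ f=P(f_1,\ldots,f_n)=Q^2$ on $\C^n$. Since squaring on $\C^*$ induces multiplication by $2$ on $\pi_1(\C^*,1)\cong\zed$, for every $\alpha\in\pi_1(\M,z_0)$ we get
\[P_*(f_*(\alpha))=(P\circ f)_*(\alpha)=(Q^2)_*(\alpha)=2Q_*(\alpha).\]
Specializing to $\alpha=\bar\gamma_t$ and using the relation $f_*(\bar\gamma_t)=\overline{f\circ\gamma}_t=\Gamma_t^{\,2}$ recorded immediately before the proposition statement, this becomes $2P_*(\Gamma_t)=2Q_*(\bar\gamma_t)=2$ by part~(i). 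Since $\zed$ is torsion-free, $P_*(\Gamma_t)=1$.

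The substantive step is the single local calculation that a $\C$-linear form with a simple zero winds exactly once about $0$ along a small, positively oriented, transverse circle; the only subtlety there is matching the complex-structure orientation on $L_t$ with the standard orientation on $\C^*$, which is automatic from holomorphicity of $\lambda_t$. Everything else is bookkeeping already arranged in the careful choice of the discs $D_t$ and the paths $\sigma_t$ and $\delta_t$, so no further obstacle is expected.
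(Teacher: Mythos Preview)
Your argument is correct and follows essentially the same route as the paper: for (i) the paper writes $Q_*=\sum_{t'}(\lambda_{t'})_*$ using that $\C^*$ is a topological group and then notes that $(\lambda_{t'})_*(\bar\gamma_t)$ equals $1$ or $0$ according as $t'=t$ or not, which is exactly your factorization $Q=\lambda_t\cdot h_t$ with the single-zero winding computation and the null-homotopy of $h_t|_{\partial D_t}$; for (ii) the paper's proof is line-for-line your computation $P_*(\Gamma_t^2)=(Q^2)_*(\bar\gamma_t)=2$ followed by division by $2$ in $\zed$.
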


\begin{proof} (i) By scaling each $\lambda_t$ if necessary, we may assume that $\lambda_t(z_0) = 1$.
Then $\displaystyle Q_* = \left(\prod_{t \in T} \lambda_{t}\right)_{\!\!*} =
 \sum_{t \in T} (\lambda_{t*})$ (since $\C^*$ is a topological group).  The result now follows 
 since our choices ensure that
 the winding number of  $(\lambda_{t})_* ( \overline{\gamma}_{t'})$  about 
the origin is one if $t=t'$ and
zero otherwise.

\medskip
\noindent
(ii) First note that 
\[ P_*(\Gamma_t^2) = P_*(\overline{f \circ \gamma}_t) = (Q^2)_*(\overline{ \gamma}_t)
= Q_*(\overline{ \gamma}_t) + Q_*(\overline{ \gamma}_t)  = 2.\]
Since $P_*$ is a group homomorphism, it follows that $P_*(\Gamma_t) = 1$.
\end{proof}

\begin{remark} Our calculation of $P_*$ from $Q_*$ can be modified to apply when 
$W$ is a Shephard group, the symmetry group of a complex polytope. The construction 
of a model for the Milnor fiber of the $W$-discriminant (in the next section) 
will also be valid in this case.\end{remark}

\section{\NCP\ model for the Milnor fiber of the discriminant} 
\label{disc}
Consider the homomorphism $P_* \colon B(W) \to \zed$ and the cover of $K$ given by 
$\X_{\pstar}:= \ker(\pstar) \backslash \X$.
\begin{proposition}
\label{homotopytype}
$\X_{\pstar}$ is homotopy equivalent to the Milnor fiber of the discriminant $P$.
\end{proposition}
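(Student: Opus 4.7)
The plan is to chain together the tools already established in the excerpt. By Corollary~\ref{fib}(i), the Milnor fiber $F_P$ is homotopy equivalent to the cover of $f(M)$ corresponding to $\ker(P_*)\leq B(W)=\pi_1(f(M),f(z_0))$. So the work reduces to showing that $\X_{\pstar}$ realizes the same cover of $f(M)$, up to homotopy.

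First I would record the essentially formal observation that since $\X$ is contractible and $B(W)$ acts freely on $\X$, the quotient $\ker(\pstar)\backslash \X$ is a $K(\ker(\pstar),1)$, and the natural projection $\X_{\pstar} \to K = B(W)\backslash \X$ is the covering map of $K$ classified by the subgroup $\ker(\pstar)\subset B(W)=\pi_1(K)$. In particular, $\pi_1(\X_{\pstar})\cong \ker(\pstar)$ and the inclusion of fundamental groups induced by the covering projection corresponds, under the identification from subsection~\ref{background}, to the inclusion $\ker(\pstar)\hookrightarrow B(W)$.

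Next I would invoke the fact from the discriminant subsection that $f(M)=W\backslash M$ is also a $K(B(W),1)$. Since both $K$ and $f(M)$ are aspherical CW complexes with isomorphic fundamental groups, there is a homotopy equivalence $K\simeq f(M)$ inducing, on $\pi_1$, an isomorphism compatible with the identifications we have been making; here one needs only that the generator $[t]\in B(W)$ corresponds on the $K$ side to the cell structure of subsection~\ref{background} and on the $f(M)$ side to the loop $\Gamma_t$ from the previous section, so that $P_*$ is the same homomorphism $B(W)\to \zed$ on both sides (confirmed by Proposition~\ref{uplink}). By the classification of covering spaces, this homotopy equivalence lifts to a homotopy equivalence between the respective connected covers corresponding to $\ker(\pstar)$, namely $\X_{\pstar}$ on the $K$ side and, by Corollary~\ref{fib}(i), a space homotopy equivalent to $F_P$ on the $f(M)$ side.

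The entire argument is formal once the pieces are lined up; the only place requiring a bit of care is the compatibility of the identifications $\pi_1(K)\cong B(W)\cong \pi_1(f(M),f(z_0))$ so that the single symbol $\ker(\pstar)$ really does specify the same subgroup of $\pi_1$ under both identifications. I would handle this by noting that both identifications are built from the same \NCP\ generators $[t]$ (on the $K$ side as $1$-cells coming from the chain $e<t$, on the $f(M)$ side as the classes $\Gamma_t$), and then chaining the homotopy equivalences yields $\X_{\pstar}\simeq F_P$.
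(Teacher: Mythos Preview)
Your proposal is correct and follows essentially the same approach as the paper: invoke Corollary~\ref{fib}(i) and Proposition~\ref{uplink}, and use that corresponding covers of the two $K(B(W),1)$ spaces $K$ and $W\backslash M$ are homotopy equivalent. The paper's proof is a one-line version of exactly this argument; your only addition is spelling out explicitly the compatibility of the identifications $\pi_1(K)\cong B(W)\cong \pi_1(f(M))$, which the paper leaves implicit.
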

\begin{proof}  This follows from Corollary~\ref{fib}(i) and Proposition~\ref{uplink},
since corresponding covers of $K$ and $W\backslash M$ are homotopy equivalent.
\end{proof} 
\begin{remark}
By Section~\ref{quotients}, $\X_{\pstar}$ can be identified with the CW-complex 
whose cells are pairs, $(m, \sigma)$ for $m \in \zed$ 
and $\sigma$ an initialized chain in $L$.  
Since $P_*([t])=1$ for each reflection generator $[t]$ of $B(W)$,
it follows that $P_*([w])=|w|$, the reflection length of $w$, for each \NCP \ 
$w \in W$.
Hence, the top face of the cell $(m, e<w_1<\dots <w_k)$ is the cell 
\[(m+|w_1|, e<w_1^{-1}w_2<\dots <w_1^{-1}w_k).\]
\end{remark}
\begin{definition} We define the {\em small \NCP\ model} $\FPhat$ of the Milnor fiber of $P_W$ to be the finite subcomplex of $\X_{\pstar}$ consisting of the cells of the form 
\[(m, e<w_1<w_2<\dots < w_k) \ \text{with} \ 0\le m < n-|w_k|.
\]
\label{enn}
\end{definition}
\begin{remark} We observe that $\FPhat$ is the union of cells of the form
\[(0,e  \lessdot w_1 \lessdot w_2 \lessdot \dots \lessdot w_{n-1})\]
together with their faces. 
In particular, $\FPhat$ is $(n-1)$-dimensional.
\end{remark}
\begin{proposition}
\label{retraction}
The subcomplex $\FPhat$ is a strong deformation retract of $\X_{\pstar}$.
\end{proposition}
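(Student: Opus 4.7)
The plan is to construct an acyclic matching (in the sense of discrete Morse theory) on the cells of $\X_{\pstar}$ whose critical (unmatched) cells are precisely those of $\FPhat$, and to deduce a sequence of elementary collapses that realises the strong deformation retraction. Call a cell $(m,e<w_1<\dots<w_k)$ of $\X_{\pstar}$ \emph{top-type} if $w_k=\gamma$, and \emph{base-type} otherwise. Since the condition $0\le m<n-|w_k|$ defining $\FPhat$ is vacuous when $|w_k|=n$, every cell of $\FPhat$ is base-type.

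I would match each top-type cell $c=(m,e<u_1<\dots<u_k<\gamma)$ with its top face when $m\ge 0$ and with its bottom face when $m<0$. By Lemma~\ref{incidence}, any base-type cell $d=(m_d,e<w_1<\dots<w_k)$ (with $|w_k|<n$) is incident on exactly two top-type cells: one, call it $A_B$, at level $m_d$ having $d$ as its bottom face, and one, $A_T$, at level $m_d-(n-|w_k|)$ having $d$ as its top face. With the matching rule above, $A_B$ is paired with $d$ exactly when $m_d<0$, and $A_T$ with $d$ exactly when $m_d\ge n-|w_k|$. These two conditions are mutually exclusive (so no base-type cell is paired twice, and the matching is well defined) and they fail simultaneously exactly when $0\le m_d<n-|w_k|$, i.e.\ exactly when $d$ lies in $\FPhat$. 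Thus the unmatched cells form $\FPhat$.

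The main obstacle is acyclicity of the matching, and it is handled by a potential function. In any gradient $V$-path $\alpha_0,\beta_0,\alpha_1,\beta_1,\dots$ with $\beta_i$ matched to $\alpha_i$, the requirement that $\alpha_{i+1}$ be matched upward forces it to be base-type, hence the unique other base-type face of $\beta_i$; and then $\beta_{i+1}$ is determined by Lemma~\ref{incidence}. A direct computation, splitting into the two cases of the matching rule, yields
\[
|m_{\beta_{i+1}}|\;=\;|m_{\beta_i}|-\delta_i,
\]
where $\delta_i$ is either $|u_1|$ or $n-|u_k|$ for the chain of $\beta_i$ -- a positive integer in both cases. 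Hence the non-negative integer $|m_{\beta_i}|$ strictly decreases, every $V$-path is finite, and the matching is acyclic. Executing the associated elementary collapses in order of decreasing $|m_\beta|$ produces the desired strong deformation retraction of $\X_{\pstar}$ onto $\FPhat$.
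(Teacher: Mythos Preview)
Your proof is correct and follows essentially the same approach as the paper: the same acyclic matching (top-type cells paired with their top face when $m\ge 0$ and bottom face when $m<0$), the same identification of the critical cells via Lemma~\ref{incidence}, and an acyclicity argument based on the monotone behaviour of the level $m$ along a gradient path. Your packaging of that last step as a strictly decreasing potential $|m_{\beta_i}|$ is a clean variant of the paper's argument, which instead tracks the level $l_i$ of the top-type cells in the reverse direction and observes it moves monotonically away from~$0$; the two are equivalent once one notes that the paper's alternating path and your $V$-path traverse the same cycle in opposite directions.
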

\begin{proof}  We construct an acyclic matching (see chapter 11 of \cite{K}) which pairs cells of 
$\X_{\pstar}- \FPhat$.  
Suppose $c_{k+1} = (m,\sigma)$ and that the chain $\sigma$ ends in $\gamma$. Then 
this matching pairs $c_{k+1} $ with 
its top (resp.~bottom) face if $m \ge 0$ (resp.~$m < 0$). In particular, the matching pairs 
cells whose chains end in $\gamma$ with cells whose chains do not end in $\gamma$.
\vskip .2cm 
To show that this matching is acyclic, consider an alternating path 
\[(l_1,\sigma_1) \succ_m (l_2,\sigma_2)\prec  (l_3,\sigma_3) \succ_m (l_4,\sigma_4)\prec \dots \]
in this matching, where 
$(l_i,\sigma_i)\prec  (l_{i+1},\sigma_{i+1})$ means that $(l_i,\sigma_i)$ is a facet of 
$(l_{i+1},\sigma_{i+1})$ and $(l_j,\sigma_j)\succ_m  (l_{j+1},\sigma_{j+1})$ means 
that $(l_j,\sigma_j)$ is matched with its facet $(l_{j+1},\sigma_{j+1})$.  
By definition of the matching, if $i$ is odd, the chain $\sigma_i$ ends in $\gamma$ 
and $\sigma_{i+1}$ 
does not end in $\gamma$.   By Lemma~\ref{incidence}, each  such $(l_{i+1},\sigma_{i+1})$ 
is incident on precisely two $(k+1)$-cells with chains ending in $\gamma$.  These 
must be  $(l_i,\sigma_i)$ and $(l_{i+2},\sigma_{i+2})$.  If $l_i<0$ it follows that  $(l_{i+1},\sigma_{i+1})$ is the bottom 
face of $(l_i,\sigma_i)$ and hence  $l_{i+2}> l_i$ for all $i$.  
Similarly, if $l_i \ge 0$ it follows that  $(l_{i+1},\sigma_{i+1})$ is the top face 
of $(l_{i+2},\sigma_{i+2})$ and hence  $l_{i+2}< l_i$ for all $i$.  
 In particular, the path 
cannot form a cycle and the matching 
is acyclic.  
\vskip .2cm
It remains to show that the set of critical cells is precisely 
the set of cells of $\FPhat$.    
Let $c_k = (m, e<w_1< \dots < w_k)$ be a cell of $\X_{\pstar}$.  
When $w_k =\gamma$, $c_k$ is matched with its top or bottom face 
according as $m \ge 0$ or $m <0$ and, hence, is not critical.  When $w_k \ne \gamma$,  
two cases arise.  If $m< 0$ then $c_k$ is matched as the bottom face of  
$(m, e<w_1< \dots < w_k< \gamma)$ and is not critical.  On the other hand,  if $m\ge 0$, 
then $c_k$ is matched as the top face of
\[(m-n+|w_k|,e< \gamma w_k^{-1}< \gamma w_k^{-1}w_1< \dots < \gamma w_k^{-1}w_{k-1}<\gamma)\] 
 if and only if $m-n+|w_k| \ge 0$.
\end{proof}

\begin{corollary} The finite complex $\FPhat$ is homotopy equivalent to the Milnor fiber $F_P$
and is a $K(\pi, 1)$ for $\pi=\ker(\pstar)$.
\end{corollary}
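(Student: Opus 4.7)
The plan is to assemble the corollary directly from two results already established, rather than performing any new construction. First I would invoke Proposition~\ref{retraction}, which exhibits $\FPhat$ as a strong deformation retract of $\X_{\pstar}$; this immediately gives a homotopy equivalence $\FPhat \simeq \X_{\pstar}$. I would then chain this with Proposition~\ref{homotopytype}, which asserts $\X_{\pstar} \simeq F_P$. Composing the two equivalences yields the first half of the corollary, $\FPhat \simeq F_P$.

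For the aspherical statement, I would appeal to the general principle recorded in Section~\ref{quotients}: because $\X$ is contractible and $B(W)$ acts freely on it, the quotient $\X_H = H \backslash \X$ is a $K(H,1)$ for any subgroup $H \le B(W)$. Specializing to $H = \ker(\pstar) = \pi$, the cover $\X_{\pstar}$ is a $K(\pi,1)$. Since $\FPhat$ is homotopy equivalent (indeed a deformation retract) of $\X_{\pstar}$, it has the same homotopy groups in every degree, so $\FPhat$ is also a $K(\pi,1)$.

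The genuine content of the corollary lives upstream: in the acyclic matching constructed in Proposition~\ref{retraction} and in Corollary~\ref{fib}(i), which together underwrite the two homotopy equivalences I am chaining. The corollary itself is a packaging result, and I do not anticipate any real obstacle—just the observation that $\FPhat$ is by construction a finite CW complex (the definition restricts to a bounded range $0 \le m < n - |w_k|$), so both the finiteness and the asphericity follow automatically once the homotopy equivalence is in hand.
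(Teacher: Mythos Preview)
Your proposal is correct and is exactly the intended argument: the paper states this corollary without proof because it follows immediately from Proposition~\ref{homotopytype} ($\X_{\pstar}\simeq F_P$) and Proposition~\ref{retraction} ($\FPhat$ is a deformation retract of $\X_{\pstar}$), together with the observation in Section~\ref{quotients} that $\X_H$ is a $K(H,1)$.
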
 
\begin{example} \label{dihedral}
Let $W$ be a dihedral group acting on $\reals^2$ with $t$ reflections, 
$R_1, R_2, \dots , R_t$.  Thus the NCPs are $\{e, R_1, R_2, \dots , R_t, \gamma\}$, 
where $\gamma$ can be taken to be a rotation through twice the angle 
between adjacent lines of symmetry.  
Here $n = 2$ and $\FPhat$ is 
$1$-dimensional with $2$ vertices, namely $(0,e)$ and $(1,e)$.  $\FPhat$ has a $1$-cell $(0, e<R_i)$ for each chain $e<R_i$ and 
the endpoints of this  $1$-cell are $(0,e)$ and $(1,e)$.  Thus $\FPhat$ has the homotopy type of the suspension of a $0$-dimensional  
subcomplex on $t$ points and $\mbox{ker}(P_*)$ is free of rank $t-1$.  
This agrees with \cite[Theorem 1]{MilOr70}; $P$ has weights $2$ and $t$. 
\end{example}
\begin{example}  
Let $W$ be the group $A_3 \cong \Sigma_4$, the symmetric group; the polynomial $P$ is the classical discriminant for univariate quartics. Choose $\gamma$ to be the four-cycle $(1,2,3,4)$. 
Here $n = 3$ and $\FPhat$ is 
$2$-dimensional.  (See Figure \ref{triangle}.)  $\FPhat$ has $3$ vertices, namely $(0,e)$, $(1,e)$ and $(2,e)$.   
Each transposition $R$
contributes a $1$-cell $(0, e<R)$ with endpoints $(0,e)$ and $(1, e)$ together 
with a $1$-cell $(1, e<R)$ with endpoints $(1, e)$ and $(2,e)$.    
Each  length two NCP $\alpha \in \{(123), (124), (134), (234), (12)(34), (14)(23)\}$
contributes a single $1$-cell $(0, e<\alpha)$ with endpoints $(0, e)$ and $(2, e)$.    Finally, 
for each of the $16$ chains of the form $e<R<\alpha$ corresponding to factorisations of 
$\gamma$ by transpositions, 
we have a $2$-cell $(0, e<R<\alpha)$ whose boundary is glued  along the three $1$-cells
$(0, e<R)$, $(0, e<\alpha)$ and $(1, e<R^{-1}\alpha)$.

\begin{figure}[h]
\begin{picture}(100, 100)(130,0)
 \put(125,5){\line(1,0){100}}
 \put(125,5){\line(2,3){50}}
 \put(175,80){\line(2,-3){50}}
  \put(85,5){$(0, e)$}
  \put(160,85){$(2, e)$}
 \put(230,5){$(1, e)$}
\put(150,-7){$(0, e<R)$}
\put(90,45){$(0, e < \alpha)$}
\put(205,45){$(1, e<R^{-1}\alpha)$}
 \end{picture}
\caption{Typical $2$-cell $(0,e<R<\alpha<\gamma)$ of $\FPhat$ for $W = A_3$.}
\label{triangle}
\end{figure}
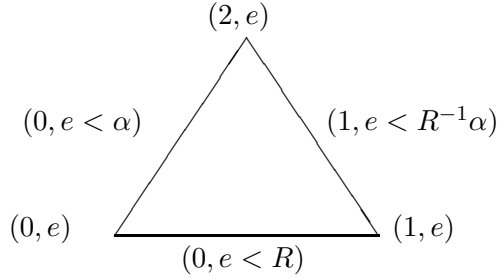
\end{example}

\begin{remark} The cells of $\FPhat$ are simplices, but $\FPhat$ is not a simplicial complex; rather it is a $\Delta$-complex in the sense of \cite{H}.  It can be realized as the nerve of the germ (in the sense of category theory, see \cite{Deh15}) with objects $\{0, 1, \ldots, n-1\}$ and morphisms $i \overset{\alpha}{\longrightarrow} j$
where $\alpha$ is an NCP of length $j-i$.  The composition, when defined, is determined by multiplication of NCPs.
\end{remark} 

\begin{remark}\label{mono}
The monodromy action on $\FPhat$ is obtained by composing the monodromy action on $\X_{\pstar}$ with the retraction defined by the acyclic matching.  It is more convenient to describe the action of $-1 \in \zed$.  
Explicitly a cell of form 
\[(m, e<w_1<w_2<\dots < w_k)\]
for $0< m < n-|w_k|$ is taken to 
\[
 (m-1, e<w_1<w_2<\dots < w_k) 
\]
while
\[(0, e<w_1<w_2<\dots < w_k)\]  for $ |w_k|< n$ is taken to 
\[(|w_1|-1, e<w_1^{-1}w_2<w_1^{-1}w_3<\dots < w_1^{-1}w_k< w_1^{-1}\gamma).\]
The second case is explained as follows.  The action of $-1$ on the $k$-cell 
$(0, e<w_1<w_2<\dots < w_k)$ as a cell in $\X_{\pstar}$ takes it to the 
$k$-cell $(-1, e<w_1<w_2<\dots < w_k)$.  This last $k$-cell is the bottom face 
of the $(k+1)$-cell $(-1, e<w_1<w_2<\dots < w_k< \gamma)$, which in turn has top face  
$(|w_1|-1, e<w_1^{-1}w_2<w_1^{-1}w_3<\dots < w_1^{-1}w_k< w_1^{-1}\gamma)$.
\end{remark}
  
\section{\NCP\ model for the Milnor fiber of the arrangement}
\label{fullfibre}
The homomorphisms $s$ and $\pstar$ from Example~\ref{pure}  and 
 Section~\ref{disc} combine to give the homomorphism
\[\psi=(\pstar,s):B(W) \mapsto \zed \times W: x \mapsto (\pstar(x), s(x)).\]
The space $\X_\psi$ (constructed as in subsection~\ref{quotients})
is then a $K(\pi, 1)$ for $\pi=\ker(\psi)$.



\begin{proposition} The complex $\X_{\psi}$ is homotopy equivalent to the Milnor fiber $F_Q$ of $Q$.
\end{proposition}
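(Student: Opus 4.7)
The plan is to show that $\X_\psi$ and $F_Q$ are homotopy equivalent by exhibiting them as classifying spaces of the same subgroup of $B(W)$, using Corollary~\ref{fib}(ii) and the construction of $\X_\psi$ as the cover of $K$ corresponding to $\ker(\psi)$.

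First I would unpack the target subgroup. By construction, $\X_\psi = \X_{\ker(\psi)}$ is the cover of the $K(B(W),1)$-complex $K$ corresponding to $\ker(\psi) = \ker(P_*) \cap \ker(s)$. On the other hand, Corollary~\ref{fib}(ii) asserts that $F_Q$ is homotopy equivalent to the cover of $f(M) = W\backslash\M$ corresponding to $f_*(\ker(Q_*))$. Since both $K$ and $f(M)$ are $K(B(W),1)$ spaces (the former by the construction of $\X$, the latter by \cite{Bries71,De72}), any two connected covers of these spaces whose fundamental groups map to the same subgroup of $B(W)$ are homotopy equivalent. Thus it suffices to verify the equality of subgroups
\[\ker(\psi) = f_*(\ker(Q_*)) \quad \text{in} \ B(W).\]

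Second, I would compute $P_* \circ f_*$ on generators of $PB(W)$. From the proof of Proposition~\ref{uplink}(ii), the loop $f\circ \gamma_t$ represents $\Gamma_t^2$, so $f_*(\overline{\gamma}_t) = \Gamma_t^2$, whence
\[P_*(f_*(\overline{\gamma}_t)) = P_*(\Gamma_t^2) = 2 P_*(\Gamma_t) = 2 = 2\, Q_*(\overline{\gamma}_t).\]
As the classes $\overline{\gamma}_t$ generate $PB(W) = \pi_1(\M,z_0)$ \cite{Bries71}, this gives $P_* \circ f_* = 2 Q_*$ on $PB(W)$. Because $\zed$ is torsion-free, $\ker(2 Q_*) = \ker(Q_*)$, so
\[f_*(\ker(Q_*)) = f_*(PB(W)) \cap \ker(P_*).\]
By Example~\ref{pure}, $\ker(s) = PB(W)$ viewed as a subgroup of $B(W)$ via $f_*$; that is, $f_*(PB(W)) = \ker(s)$. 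Therefore
\[f_*(\ker(Q_*)) = \ker(s) \cap \ker(P_*) = \ker(\psi),\]
which completes the identification and the proof.

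The argument is short, and the only delicate point is keeping track of the two fundamental groups involved: the identification $f_*(PB(W)) = \ker(s)$ and the factor of $2$ relating $P_* \circ f_*$ to $Q_*$. Both are provided by Example~\ref{pure} and the computation in Proposition~\ref{uplink}(ii), so no new obstacle arises.
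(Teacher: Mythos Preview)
Your proof is correct and follows essentially the same route as the paper: both reduce to showing $\ker(\psi)=f_*(\ker(Q_*))$ via $\ker(s)=\im(f_*)$ and the relation $P_*\circ f_*=2Q_*$, then invoke Corollary~\ref{fib}(ii). The only cosmetic difference is that the paper obtains $P_*\circ f_*=(Q^2)_*=2Q_*$ directly from the polynomial identity $P\circ f=Q^2$, whereas you verify it on the generators $\overline{\gamma}_t$ using Proposition~\ref{uplink}.
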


\begin{proof} By \cite{Bries71}, the map $f \colon \M \to W \backslash \M \cong \C^n - \Delta$ 
induces an injection $f_* \colon \pi_1(\M) \to B(W)$ whose image is the kernel  of 
$s \colon B(W) \to W$. 
Now
\begin{eqnarray*}
\ker(\psi) &=& \ker(\pstar) \cap \ker(s)\\
&= &\ker(\pstar) \cap \im(f_*) \\
& =& f_*(\ker(\pstar \circ f_*))\\
& =& f_*(\ker((Q^2)_*)) \mbox{ \ \ since } P \circ f = Q^2\\
& =& f_*(\ker(Q_*)) \mbox{ \ \ since } (Q^2)_*=2(Q_*).
\end{eqnarray*}
It follows that  
$X_\psi$ is homotopy equivalent to the cover of $K$ 
corresponding to $f_*(\ker(Q_*))$ and hence to the cover of $W\backslash M$ corresponding to 
$f_*(\ker(Q_*))$.  However, this latter cover is homotopy equivalent to 
$F_Q$ by  Corollary~\ref{fib}(ii).
\end{proof}

\begin{remark}The cover $\X_{\psi}$ is a simplicial complex.  The vertices of the $k$-cell 
$c_k=((P_*(x), s(x)), e< w_1< \cdots < w_k)$
are $((P_*(x), s(x)),e)$ and $((P_*(x)+|w_i|, s(xw_i)),e)$, $1\leq i \leq k$.
This set of $k+1$ vertices uniquely determines $c_k$. 
\end{remark}

\begin{remark}The map $\psi=(\pstar,s)$ is not onto.  For each $x \in B(W)$,  the integer 
$\pstar(x)$ is even 
if and only if $s(x)$ belongs to 
the subgroup  $W^+ < W$ of Section~\ref{discriminant}.    
Thus $\X_{\psi}$ can be identified with the CW-complex whose cells are triples, 
$(m,w, \sigma)$, where $\sigma$ is an initialized  
chain in $L$ and 
the parity of $m \in \zed$ is the same as that of 
$w \in W$.  The top face of  $(m, w, e<w_1<\dots <w_k)$ 
is 
\((m+|w_1|, ww_1, e<w_1^{-1}w_2<\dots <w_1^{-1}w_k)\),
while the other faces are given by
$(m, w, e<w_1<\dots <\widehat{w_i} < \dots <w_k)$ for $1\le i \le k$.
\end{remark}

\begin{definition} 
 We define  the {\em small \NCP\ model}  $\FQhat$  of the Milnor fiber of $Q$ to be the finite subcomplex of 
$\X_{\psi}$ which is the preimage of $\FPhat$ under 
the covering $\X_{\psi}\to \X_{\pstar}$ determined by the subgroup inclusion 
$\ker(\psi) \subseteq \ker(\pstar)$.
\end{definition}
\begin{remark}The complex $\FQhat$  is the union of all cells of the form
\[(0, w, e  \lessdot w_1 \lessdot w_2 \lessdot \dots \lessdot w_{n-1}) \ \mbox{where} \ w \in W^+\]
together with their faces. 
\end{remark}
\begin{proposition} The subcomplex $\FQhat$ is a strong deformation retract of the  cover $\X_{\psi}$.
\end{proposition}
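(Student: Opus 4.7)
The plan is to mimic the proof of Proposition~\ref{retraction}, transferring its acyclic matching upward through the natural cellular covering projection $p \colon \X_{\psi} \to \X_{\pstar}$, $(m, w, \sigma) \mapsto (m, \sigma)$, induced by the inclusion $\ker(\psi) \subseteq \ker(\pstar)$. Concretely, for each cell $c = (m, w, \sigma)$ of $\X_{\psi}$ whose chain $\sigma$ ends in $\gamma$, I would pair $c$ with its top face if $m \geq 0$ and with its bottom face if $m < 0$, using the top/bottom face formulas for $\X_{\psi}$ recorded just above the definition of $\FQhat$. The same verification as in Proposition~\ref{retraction} shows that no matched face satisfies the condition $0 \leq m' < n - |w_k'|$ defining $\FPhat$, so all matched pairs lie in $\X_{\psi} - \FQhat$.

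For acyclicity, I would observe that $p$ sends every alternating path in the matching on $\X_{\psi}$ to an alternating path in the matching on $\X_{\pstar}$ of Proposition~\ref{retraction}, preserving all data relevant to the matching rule (the integer $m$ and whether the chain ends in $\gamma$). The proof of Proposition~\ref{retraction} shows that any such path downstairs has strictly monotone integer coordinate in its odd positions, hence no cycles, so no lifted path upstairs can close up either.

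Finally, I would identify the critical cells. A cell $(m, w, \sigma)$ with $\sigma = e < w_1 < \cdots < w_k$ whose chain does not end in $\gamma$ is, by Lemma~\ref{incidence}, a face of exactly two $(k+1)$-cells in $\X_{\psi}$ whose chains end in $\gamma$; one has $(m, w, \sigma)$ as its bottom face (and is matched to it iff $m < 0$), while the other has $(m, w, \sigma)$ as its top face with integer coordinate $m - n + |w_k|$ (matched to it iff $m \geq n - |w_k|$). Hence $(m, w, \sigma)$ is critical precisely when $0 \leq m < n - |w_k|$, recovering exactly the cells of $\FQhat$. The discrete Morse theorem of \cite{K} then yields the strong deformation retraction. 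The main obstacle is confirming that the two $(k+1)$-cells supplied by Lemma~\ref{incidence} above a given $(m, w, \sigma)$ carry the correct $W$-coordinate; but the top/bottom face formulas make the update of $w$ deterministic from the $\X_{\pstar}$-level incidence, so the lifting is automatic.
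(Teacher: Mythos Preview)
Your argument is correct, but it takes a different route from the paper. The paper dispatches the proposition in two lines: since $p\colon \X_{\psi}\to \X_{\pstar}$ is a covering map and Proposition~\ref{retraction} already provides a strong deformation retraction of $\X_{\pstar}$ onto $\FPhat$, the homotopy lifting property (together with unique path lifting, to check that points of $p^{-1}(\FPhat)$ stay fixed) yields a strong deformation retraction of $\X_{\psi}$ onto $p^{-1}(\FPhat)=\FQhat$. No matching needs to be rebuilt, and no critical cells need to be identified, because $\FQhat$ is \emph{defined} as the preimage of $\FPhat$.

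Your approach instead lifts the acyclic matching of Proposition~\ref{retraction} cell by cell to $\X_{\psi}$ and reruns the discrete Morse argument there. All the steps go through as you describe: the matching rule depends only on $(m,\sigma)$, Lemma~\ref{incidence} is stated for general $\X_H$ and so applies to $\X_{\psi}$, and the strict monotonicity of the integer coordinate along odd positions of an alternating path transfers verbatim since $p$ preserves that coordinate. What you gain is an explicit cellular collapse of $\X_{\psi}$ onto $\FQhat$, which is exactly what is used later to describe the monodromy action on $\FQhat$; what you pay is reproving something the covering-space machinery gives for free.
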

\textbf{Proof:  }    The simplicial complex $\X_{\psi}$ is the cover of $\X_{\pstar}$  
determined by the subgroup inclusion $\mbox{ker}(\pstar,s) <\mbox{ker}(\pstar)$. 
By the homotopy lifting property, the strong deformation retraction of $\X_{\pstar}$ 
onto $\FPhat$ (proposition~\ref{retraction}) is covered by a  strong deformation retraction 
of $\X_{\psi}$ onto $\FQhat$.
\begin{corollary} The finite simplicial complex $\FQhat$ is homotopy equivalent to the Milnor fiber
$F_Q$ and is a $K(\pi, 1)$ for the 
group $\mbox{ker}((\pstar,s))$.
\end{corollary}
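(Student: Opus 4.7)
The plan is to chain together two results already established in Section~\ref{fullfibre}. First, the immediately preceding proposition shows that $\FQhat$ is a strong deformation retract of $\X_\psi$, so in particular $\FQhat$ is homotopy equivalent to $\X_\psi$. Second, the earlier proposition of this section identifies $\X_\psi$ as homotopy equivalent to the Milnor fiber $F_Q$. Composing these two homotopy equivalences yields the first assertion, namely that $\FQhat \simeq F_Q$.

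For the $K(\pi,1)$ claim with $\pi = \ker(\pstar, s) = \ker(\psi)$, the plan is to invoke the general construction of $\X_H$ from Subsection~\ref{quotients}. There it was observed that since $B(W)$ acts freely on the contractible complex $\X$, so does the subgroup $\ker(\psi)$, and therefore the quotient $\X_\psi = \ker(\psi) \backslash \X$ is a $K(\ker(\psi), 1)$. Since strong deformation retracts preserve homotopy type, and hence all homotopy groups, the fact that $\FQhat$ is a strong deformation retract of $\X_\psi$ transfers the $K(\pi,1)$ property directly to $\FQhat$.

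Finally, that $\FQhat$ is a finite simplicial complex (not merely a $\Delta$-complex) is inherited from the observation earlier in this section that $\X_\psi$ is a simplicial complex (the $k+1$ vertices of each cell uniquely determine it), together with the finiteness guaranteed by the definition: $\FQhat$ is the union of cells of the form $(0, w, e \lessdot w_1 \lessdot \cdots \lessdot w_{n-1})$ with $w \in W^+$ and their faces, of which there are finitely many.

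There is essentially no main obstacle; this corollary is a packaging statement that assembles the strong deformation retraction, the homotopy equivalence $\X_\psi \simeq F_Q$, and the free action of $\ker(\psi)$ on the contractible complex $\X$ into a single convenient finite model. The only point requiring any care is making sure the retraction of the preceding proposition lands correctly in the simplicial structure, but this was already handled there via the homotopy lifting property from the acyclic matching on $\X_{\pstar}$.
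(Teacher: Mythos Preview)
Your proposal is correct and follows exactly the approach the paper intends: the corollary is stated without proof because it is an immediate consequence of the two preceding propositions (that $\X_\psi \simeq F_Q$ and that $\FQhat$ is a strong deformation retract of $\X_\psi$) together with the general fact from Subsection~\ref{quotients} that $\X_H$ is a $K(H,1)$. Your additional remarks on finiteness and the simplicial structure are also in line with the paper's earlier observations.
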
 
\begin{example} \label{dihedral2}
When $W$ is the dihedral group acting on $\reals^2$ with $t$ 
reflections, $R_1, R_2, \dots , R_t$, 
the subcomplex $\FQhat$ has the following description.    Each vertex $(0,w, e)$ 
has $w \in W^+$, the rotation subgroup of $W$, while each vertex $(1,w, e)$ has $w \in W- W^+$, the set of reflections of $W$.   Furthermore, for each rotation $w \in W^+$ and each reflection $R$ there is another 
reflection $S$ with $w = RS$ giving an edge in $\FQhat$ labelled $(0,w,e<S)$,  starting at $(0,w, e)$ and ending at 
$(1,R, e)$.  
Thus  $\FQhat$ is the complete bipartite graph $K_{t,t}$, which is homotopy equivalent to a bouquet 
of $(t-1)^2$ circles. 
This is consistent with the calculations in  \cite[Theorem 1]{MilOr70}. 
\end{example}
\begin{remark} 
The monodromy action on $\FQhat$ is obtained as in Remark~\ref{mono} by composing the monodromy action on $\X_{\psi}$ with the retraction defined by the acyclic matching.  In this case, the 
action on $\X_{\psi}$ is by shifting the height of cells by multiples of $2$.  
However, since $\FQhat$ is a simplicial complex, it is
sufficient to compute the action on $0$-cells.  We describe the action of $-2 \in \zed$.  

Explicitly, we define the action of $-2$ on $0$-cells by
\[((m,w),e) \mapsto \left\{\begin{array}{cc}
((m-2,w),e)&2\le m \le n-1\\
((n-1, w\gamma),e) & m = 1\\
((n-2,w\gamma) ,e) & m = 0.
\end{array}
\right.
\]

The second case is explained as follows.  The action of $-2$ on the $0$-cell $((1,w),e)$ 
as a cell in $\X_{\psi}$ takes it to the $0$-cell $((-1,w),e)$.  This last $0$-cell is the bottom face 
of the $1$-cell $((-1,w), e< \gamma)$, which in turn has top face  
$((n-1, w\gamma),e)$.  The third case is similar.
\end{remark} 


\section{Structure and homology of \FPhat\ }
Although our model $\FPhat$ of the Milnor fiber of $P$ is not a simplicial complex, 
it does have a combinatorial description as a sequence of mapping cones.   
The domains of the mappings in question are 
order complexes of truncations of  
the non-crossing partition lattice~$L$ and
the lexicographic shellability of $L$ yields a chain complex which 
computes $H_*(\FPhat,\zed)$.
\vskip .2cm
Let $L_{[i,j]}= \{w \in L : i \le |w| \le j\}$ and let
$A_{i} = \{(m,\sigma)\in \FPhat \mid m \ge n-i-1\}$  where 
$i,j$ are integers with $0\le i\le  j \le n$. Note that $A_{i}$ has dimension 
$i$ and that 
\(A_0 \subset A_1 \subset \dots \subset A_{n-1} = \FPhat\).
Define $g_i: |L_{[1,n-i-1]}| \to A_{n-i-2}$ by
 \[w_1<w_2<\dots < w_k \mapsto (i+|w_1|, e<w_1^{-1}w_2< \dots < w_1^{-1}w_k).\]
\begin{proposition} 
\label{cone}
The mapping cone of $g_i$ is cellularly isomorphic to $A_{n-i-1}$.
\end{proposition}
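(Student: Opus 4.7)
The plan is to exhibit an explicit cellular isomorphism $\Phi \colon M(g_i) \to A_{n-i-1}$ that restricts to the identity on the subcomplex $A_{n-i-2}$. The guiding observation is that the formula defining $g_i$ is engineered to reproduce the top-face formula for the new cells appearing in $A_{n-i-1}$.

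The first step will be to identify the cells in $A_{n-i-1} \setminus A_{n-i-2}$. A cell $(m, e<w_1<\dots<w_k)$ of $\FPhat$ lies in this difference exactly when $m=i$, in which case the defining constraint $0\le m<n-|w_k|$ reduces to $|w_k|\le n-i-1$. Hence these new cells biject with (possibly empty) chains in $L_{[1,n-i-1]}$: the empty chain gives the $0$-cell $(i,e)$ and a chain $w_1<\dots<w_k$ gives the $k$-cell $(i, e<w_1<\dots<w_k)$. This count already matches the cells of the cone $C|L_{[1,n-i-1]}|$ that involve the cone vertex.

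Accordingly I will write $M(g_i) = A_{n-i-2} \cup_{g_i} C|L_{[1,n-i-1]}|$, denote the cone vertex by $v$, and define $\Phi$ to be the identity on $A_{n-i-2}$, to send $v$ to $(i,e)$, and to send each $k$-simplex $\{v,w_1,\dots,w_k\}$ in the cone (arising from a chain $w_1<\dots<w_k$ in $L_{[1,n-i-1]}$) to the cell $(i, e<w_1<\dots<w_k)$. The central verification is that $\Phi$ respects the gluing in the mapping cone: the $v$-opposite face $\{w_1,\dots,w_k\}$ of $\{v,w_1,\dots,w_k\}$ is identified in $M(g_i)$ with $g_i(w_1<\dots<w_k) = (i+|w_1|, e<w_1^{-1}w_2<\dots<w_1^{-1}w_k)$, and by the top-face formula recorded in Section~\ref{disc} this is exactly the top face of $(i, e<w_1<\dots<w_k)$. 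The remaining faces $\{v,w_1,\dots,\widehat{w_j},\dots,w_k\}$ of the cone simplex map under $\Phi$ to the cells $(i, e<w_1<\dots<\widehat{w_j}<\dots<w_k)$, which are precisely the non-top faces of the target cell, so the attaching data match face by face.

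The main obstacle is purely organizational: it is checking that the top-face formula of $\FPhat$ aligns with the formula defining $g_i$. Once this alignment is recorded---which is essentially the design of $g_i$---the remainder is a routine verification that $\Phi$ is a bijection on cells with matching boundary identifications, hence a cellular isomorphism.
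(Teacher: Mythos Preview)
Your proof is correct and is essentially the same as the paper's: both identify the cone $C|L_{[1,n-i-1]}|$ with the order complex $|L_{[0,n-i-1]}|$, map it to the cells of $A_{n-i-1}$ at height $m=i$ via $e<w_1<\dots<w_k \mapsto (i,e<w_1<\dots<w_k)$, and observe that the base $|L_{[1,n-i-1]}|$ is glued into $A_{n-i-2}$ precisely by the top-face formula, which is exactly $g_i$. Your account is in fact somewhat more explicit about the face-by-face verification than the paper's sketch.
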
 
\begin{proof} 
Note that $|L_{[0,k]}|$ is simplicially isomorphic to the cone on $|L_{[1,k]}|$ 
since $L$ has a unique minimal element $e$.   Under this identification, 
the subcomplex $|L_{[1,k]}|$ of $|L_{[0,k]}|$ corresponds to 
$|L_{[1,k]}|\times \{1\}\subset |L_{[1,k]}|\times [0,1]$.    The map 
$|L_{[0,n-i-1]}| \to A_{n-i-1}$ given by 
\[e<w_1<w_2<\dots < w_k \mapsto (i ,e<w_1<w_2<\dots < w_k).\]
combines with the inclusion of $A_{n-i-2}$ into $A_{n-i-1}$ to give a map
\[\hat{g}_i: (|L_{[1,n-i-1]}|\times [0,1] )\coprod A_{n-i-2} \to A_{n-i-1} . \]
 One can show 
that $ \hat{g}_i$ is an identification map which identifies precisely the pairs 
$(x,0)$ with $ (x',0)$ for each $x,x' \in |L_{[1,n-i-1]}|$ and $(x,1)$ with $g_i(x)$.

\end{proof}
\begin{lemma} 
\label{homology}
For all $q \ge 1$, $H_q(A_p, A_{p-1}) \cong\tilde H_{q-1}(|L_{[1,p]}|)$.
\end{lemma}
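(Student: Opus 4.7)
The plan is to derive this directly from Proposition~\ref{cone} together with the standard mapping cone/suspension relationship.

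First I would set $i = n-p-1$ in Proposition~\ref{cone}. Under that choice the proposition identifies $A_p$ (cellularly) with the mapping cone $M(g_{n-p-1})$ of the map
\[
g_{n-p-1} \colon |L_{[1,p]}| \longrightarrow A_{p-1}.
\]
Thus the pair $(A_p, A_{p-1})$ is the pair $(M(g_{n-p-1}), A_{p-1})$, and in particular it is a good CW pair, so the quotient map induces an isomorphism
\[
H_q(A_p, A_{p-1}) \;\cong\; \tilde H_q\!\left(A_p/A_{p-1}\right).
\]

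Next I would invoke the standard fact that collapsing the target $Y$ inside a mapping cone $M(f \colon X \to Y)$ produces the (unreduced, hence reduced) suspension of $X$:
\[
M(f)/Y \;\simeq\; \Sigma X.
\]
Applying this with $f = g_{n-p-1}$, $X = |L_{[1,p]}|$, $Y = A_{p-1}$, gives
$A_p/A_{p-1} \simeq \Sigma\,|L_{[1,p]}|$. The suspension isomorphism $\tilde H_q(\Sigma Z) \cong \tilde H_{q-1}(Z)$, valid for $q \geq 1$, then yields
\[
H_q(A_p, A_{p-1}) \;\cong\; \tilde H_q\!\left(\Sigma |L_{[1,p]}|\right) \;\cong\; \tilde H_{q-1}\!\left(|L_{[1,p]}|\right),
\]
which is the claimed isomorphism.

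The only step needing any real care is verifying that the cellular identification of $A_p$ with the mapping cone of $g_{n-p-1}$ is honest enough to apply the good-pair/quotient argument; but since $A_{p-1}$ sits as a genuine subcomplex of $A_p$ (by construction as a union of faces of cells in the increasing filtration) and since $|L_{[1,p]}|$ embeds as the top of the cone in $M(g_{n-p-1})$, the identification respects the CW structure and the good-pair hypothesis is automatic. The rest is formal from Proposition~\ref{cone} and the suspension isomorphism, so no further combinatorial input is needed.
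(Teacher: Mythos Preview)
Your argument is correct and is essentially the same as the paper's own proof: both pass from the relative homology to the reduced homology of the quotient via the good-pair property, identify $A_p/A_{p-1}$ with $\Sigma|L_{[1,p]}|$ using Proposition~\ref{cone} and the fact that collapsing the target in a mapping cone yields the suspension of the source, and then apply the suspension isomorphism. You have simply made the mapping-cone-to-suspension step explicit, whereas the paper compresses it into a single line.
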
 
\begin{proof}

The filtration
\(A_0 \subset A_1 \subset \dots \subset A_{n-1} = \FPhat\)
yields
 \begin{eqnarray*}
H_q(A_p, A_{p-1})&\cong&\tilde H_q(A_p/A_{p-1})\\
&\cong&\tilde H_q(\Sigma(|L_{[1,p]}|))  \\
&\cong&\tilde H_{q-1}(|L_{[1,p]}|),
\end{eqnarray*}
where $\Sigma$ denotes suspension and the second isomorphism follows from 
Proposition~\ref{cone}.
\end{proof} 
\begin{definition}   For each $i$, we define the $i$th face map 
$d_i : C_{p-1}( L_{[1,p]}) \to C_{p-2}( L_{[1,p]})$ by 
\[d_i(w_1\lessdot w_2 \lessdot \dots \lessdot w_p) = (-1)^{i-1}(w_1\lessdot w_2 \lessdot \dots 
\lessdot \hat{w_i } \lessdot \dots\lessdot w_p)\]
and the top face map $\Omega : C_{p-1}( L_{[1,p]}) \to C_{p-2}( L_{[1,p-1]})$ by 
\[\Omega(w_1\lessdot w_2 \lessdot \dots \lessdot w_p)=
(w_1^{-1}w_2\lessdot w_1^{-1}w_3 \lessdot \dots \lessdot w_1^{-1}w_p).\]

\end{definition} 
\begin{theorem}   The homology of $\FPhat$ is isomorphic to the homology of the chain complex whose $p$th group is $\tilde H_{p-1}(|L_{[1,p]}|)$ and whose boundary homomorphism 
is given, at the level of chains, by 
$\sum a_\sigma \sigma \mapsto \sum a_\sigma \Omega(\sigma)$.
\end{theorem}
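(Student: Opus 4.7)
The plan is to compute $H_*(\FPhat)$ via the spectral sequence of the filtration
\[A_0 \subset A_1 \subset \cdots \subset A_{n-1} = \FPhat.\]
Lemma~\ref{homology} immediately identifies the $E^1$-page:
\[E^1_{p,q} = H_{p+q}(A_p, A_{p-1}) \cong \tilde H_{p+q-1}(|L_{[1,p]}|).\]
The crucial input is the lexicographic shellability of the non-crossing partition lattice $L$ established in \cite{BrWa08}. Rank-selection preserves shellability, so the order complex $|L_{[1,p]}|$ is shellable of dimension $p-1$, hence has the homotopy type of a wedge of $(p-1)$-spheres. Consequently $\tilde H_k(|L_{[1,p]}|)$ vanishes for $k \ne p-1$, so $E^1_{p,q}$ is concentrated in the row $q=0$ and the spectral sequence collapses at $E^2$. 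This reduces the computation to the homology of the chain complex with $C_p = \tilde H_{p-1}(|L_{[1,p]}|)$ and differential $d^1$ equal to the connecting homomorphism of the triple $(A_p, A_{p-1}, A_{p-2})$.

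It remains to identify $d^1$ with the map induced on homology by $\Omega$. Proposition~\ref{cone} presents $A_p$ as a mapping cone attached to $A_{p-1}$ via $g_{n-p-1} \colon |L_{[1,p]}| \to A_{p-1}$, and under the suspension isomorphism of Lemma~\ref{homology} the connecting homomorphism becomes the composite
\[\tilde H_{p-1}(|L_{[1,p]}|) \xrightarrow{(g_{n-p-1})_*} H_{p-1}(A_{p-1}) \longrightarrow H_{p-1}(A_{p-1}, A_{p-2}) \cong \tilde H_{p-2}(|L_{[1,p-1]}|).\]
Verification at the chain level proceeds on top simplices: a generator $\sigma = w_1 \lessdot \cdots \lessdot w_p$ of $C_{p-1}(L_{[1,p]})$ necessarily has $|w_1|=1$, and
\[g_{n-p-1}(\sigma) = (n-p,\ e < w_1^{-1}w_2 < \cdots < w_1^{-1}w_p)\]
is a top-dimensional cell of $A_{p-1}$ lying outside $A_{p-2}$. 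Its image in the quotient $A_{p-1}/A_{p-2} \simeq \Sigma|L_{[1,p-1]}|$ corresponds under the suspension isomorphism to the chain $w_1^{-1}w_2 \lessdot \cdots \lessdot w_1^{-1}w_p$, which is $\Omega(\sigma)$. Extending linearly gives $d^1 = \Omega_*$ on chains, and passing to homology yields the claimed chain complex.

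The principal obstacle is the chain-level identification in the final step: one must carefully track $g_{n-p-1}$ across the mapping-cone decomposition of Proposition~\ref{cone}, the suspension isomorphism of Lemma~\ref{homology}, and the boundary map of the triple $(A_p, A_{p-1}, A_{p-2})$, while confirming that sign and orientation conventions conspire to produce exactly the unsigned face map $\Omega$ rather than a signed variant. The remainder of the argument is formal once the spectral sequence is set up and shellability applied.
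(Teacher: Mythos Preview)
Your argument is correct and follows the same overall strategy as the paper: use the filtration $A_0\subset\cdots\subset A_{n-1}=\FPhat$, invoke shellability of $L$ (and its rank-selected subposets) to get concentration of $H_*(A_p,A_{p-1})$ in degree~$p$, conclude that the spectral sequence degenerates at $E^2$ (the paper phrases this via Theorem~39.4 of \cite{Mun}), and then identify the differential $d^1$ with the map induced by~$\Omega$.

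The one genuine difference is in how $d^1$ is identified. The paper works with the explicit map $b_p\colon\sigma\mapsto(n-p-1,e*\sigma)$ and computes the cellular boundary of $b_p(\sum a_\sigma\sigma)$ directly in $\FPhat$; the ``internal'' face terms $(n-p-1,e*d_i\sigma)$ are not in $A_{p-1}$, so they must be shown to cancel, and the paper does this via a combinatorial observation: on a cycle $\sum a_\sigma\sigma$ the sums $\sum a_\sigma d_i(\sigma)$ vanish separately for each~$i$, because distinct~$i$ give distinct length distributions. You instead route the computation through the attaching map $g_{n-p-1}$ of Proposition~\ref{cone}, using the standard fact that for a mapping cone the connecting homomorphism is the attaching map followed by the quotient; since $g_{n-p-1}(\sigma)=b_{p-1}(\Omega(\sigma))$ on maximal simplices, the identification with $\Omega$ is immediate and the length-distribution argument is bypassed entirely. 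Your approach is a bit cleaner conceptually; the paper's is more hands-on. Both reach the same conclusion, and your acknowledged sign/orientation caveat is equally present (and equally harmless) in the paper's treatment.
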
 
\begin{proof} 
First note that 
\[H_q(A_p, A_{p-1})\cong \tilde H_{q-1}(|L_{[1,p]}|)\cong \left\{
\begin{array}{cr}
\zed^{n_p} & \mbox{if}\ q = p,\\
0 & \mbox{if}\ q \ne p,
\end{array}
\right.
\]
where the last equality uses 
the lexicographical shellability of the \NCP\ lattices \cite{ABW, Bj}.   
By Theorem~39.4 of \cite{Mun}, 
the homology of $\FPhat$ is isomorphic to the homology of the chain complex with $p$th 
group given by $H_p(A_p, A_{p-1})$ and boundary homomorphism given by the connecting homomorphism of the exact sequence of the triple $(A_{p}, A_{p-1}, A_{p-2} )$. 
\vskip .2cm
It remains to compute the boundary homomorphism from $\tilde H_{p-1}(|L_{[1,p]}|)$ to 
$\tilde H_{p-2}(|L_{[1,p-1]}|)$.  
The isomorphism from $\tilde H_{p-1}(|L_{[1,p]}|)$ to $H_p(A_p, A_{p-1}) $ of 
Lemma~\ref{homology} is induced by 
$b_p: |L_{[1,p]}| \to A_p: \sigma \mapsto (n-p-1,e*\sigma)$, where 
$e*\sigma$ means the simplex 
represented by $e < w_1< \dots <w_l$ 
when $\sigma$  is the simplex represented by $w_1< \dots <w_l$.   
Let  $\sum a_\sigma \sigma$ in $C_{p-1}(|L_{[1,p]}|)$ be a cycle so that
\[
0 = \partial\left(\sum a_\sigma \sigma\right)
= \sum a_\sigma 
\partial(\sigma)
= \sum a_\sigma \sum_i d_i(\sigma).\]
Since each $\sigma$ is maximal, the $(p-1)$-chains $d_i\sigma $ 
and $d_j\sigma $ have different length distributions whenever $i\ne j$.  (The length distribution of 
$w_1< w_2 < \cdots < w_p$ is $(|w_1|,|w_2|,\dots,|w_p|)$.)  It follows that 
\begin{equation}\label{bdry}
\sum a_\sigma d_i(\sigma) = 0  \ \mbox{for each} \ 1\le i \le p.
\end{equation}
Using the fact that the connecting homomorphism is defined on the level of chains by the boundary map in $\FPhat$, we get
 \begin{eqnarray*}
\partial\left( b_p\left(\sum a_\sigma \sigma \right)\right) &=&  
\partial\left(\sum a_\sigma ( n-1-p,e*\sigma)\right)\\
 &=& \sum a_\sigma \partial( n-1-p,e*\sigma)\\
 &=& \sum a_\sigma \left(
 (n-p, e*\Omega(\sigma))+\sum_i( n-1-p,e*d_i\sigma)\right)\\
 &=& \sum a_\sigma 
 (n-p, e*\Omega(\sigma))\ \ \mbox{by \eqref{bdry}}\\
 &=& b_{p-1}\left(\sum a_\sigma \Omega(\sigma)\right).
 \end{eqnarray*}
\end{proof} 


\begin{ack} This project began while the second named 
author was in residence at Dublin City University in Spring, 2012, 
under a Fulbright U.S. Scholars grant.  He expresses gratitude to 
DCU and the Fulbright Commission in Ireland for support and hospitality.
\end{ack}

\bibliographystyle{plain}
\bibliography{noncrossing}
\end{document}